\newtheorem{theorem}{Theorem}
\newtheorem{lemma}[theorem]{Lemma}
\newtheorem{question}[theorem]{Question}
\newcommand{\alphabet}{\mathcal{A}}
\newcommand{\minsuff}{\Lambda}
\begin{document}
\author{Matthieu Rosenfeld\thanks{Supported by the ANR project CoCoGro (ANR-16-CE40-0005).}\\
\small \it University of Montpellier, LIRMM}
\title{Avoiding squares over words with lists of size three amongst four symbols}

%
%

\maketitle

\begin{abstract}
In 2007, Grytczuk conjecture that for any sequence $(\ell_i)_{i\ge1}$ of alphabets of size $3$ there exists a square-free infinite word $w$ such that for all $i$, the $i$-th letter of $w$ belongs to $\ell_i$. 
The result of Thue of 1906 implies that there is an infinite square-free word if all the $\ell_i$ are identical. On the other, hand Grytczuk, Przybyło and Zhu showed in 2011 that it also holds if the $\ell_i$ are of size $4$ instead of $3$.

In this article, we first show that if the lists are of size $4$, the number of square-free words is at least $2.45^n$ (the previous similar bound was $2^n$). 
We then show our main result: we can construct such a square-free word if the lists are subsets of size $3$ of the same alphabet of size $4$. Our proof also implies that there are at least $1.25^n$ square-free words of length $n$ for any such list assignment.
This proof relies on the existence of a set of coefficients verified with a computer. We suspect that the full conjecture could be resolved by this method with a much more powerful computer (but we might need to wait a few decades for such a computer to be available). 
\end{abstract}

\section{Introduction}
A \emph{square} is a word of the form $uu$ where $u$ is a non-empty word.
We say that a word is \emph{square-free} (or avoids squares) if none of its factors is a square.
For instance, $hotshots$ is a square while $minimize$ is square-free.
In 1906, Thue showed that there are arbitrarily long ternary words avoiding squares \cite{Thue06,Thue1}.
This result is often regarded as the starting point of combinatorics on words, and the generalizations of this particular question received a lot of attention.

\emph{Nonrepetitive colorings} of graphs were introduced by Alon~\textit{et~al.} \cite{alongraph}. 
A coloring of the vertices (or of the edges) of a graph is said to be \emph{nonrepetitive} if there is no path of the graph whose color sequence is a square. The \emph{nonrepetitive chromatic number} (resp. \emph{nonrepetitive chromatic index}) of a graph is the minimal number of colors in a nonrepetitive coloring of the vertices (resp. the edges) of the graph.
Alon et al. showed that nonrepetitive chromatic index is as most $O(\Delta^2)$ where $\Delta$ is the maximum degree of $G$ \cite{alongraph}.
Different authors successively improved the upper bounds on the nonrepetitive chromatic number and the nonrepetitive chromatic index and the best known bound for the nonrepetitive chromatic number is also in $O(\Delta^2)$ \cite{Dujmovic2016,MontassierEntropie,HARANT2012374,rosenfeldCounting}. 
Non-repetitive colorings have since been studied in many other contexts (see for instance \cite{woodSurvey} for a recent survey on this topic). 

Most results regarding non-repetitive colorings of graphs of bounded maximal degree are based on the Lov\'asz Local Lemma, entropy-compression, or related methods and they naturally hold in the stronger setting of list coloring. 
In this setting, each vertex is assigned a list of colors and the colorings of the graph must assign to each vertex a color from its list.
The \emph{nonrepetitive list chromatic number} is the smallest integer $n$ such that the graph is nonrepetitively colorable as soon as all the lists contain at least $n$ colors.
This notion was studied in relation to many notions of colorings and contrary to the first intuition it is often not the case that the worst possible list assignment is the one that gives the same list to every vertex. For instance, every planar graph has nonrepetitive chromatic number at most 768, but can have an arbitrarily large nonrepetitive list chromatic number  \cite{planargraphs}. However, the best known bound on the nonrepetitive chromatic number in terms of the maximal degree also holds for the nonrepetititive list chromatic number \cite{woodSurvey}. It is unknown whether the optimal bounds in terms of the maximal degree also holds for the nonrepetititive list chromatic number are indeed identical or not.
The simplest graph for which the question is non-trivial is the path. 
The result of Thue implies that the nonrepetitive chromatic number of any path over at least four vertices is $3$ \cite{Thue06}. There are various simple proofs that the nonrepetitive list chromatic number of any path is at most 4 (see \cite{GrytczukLLLlist} for a proof based on the Lefthanded Local Lemma,
\cite{GrytczukGame} for a proof based on entropy compression and \cite{rosenfeldCounting} for a proof based on a simple counting argument). 
It was first conjectured by Grytczuk \cite{Grytczukpathquestion} that the nonrepetitive list chromatic number of any path is in fact at most 3.
This conjecture has been mentioned many times, but not much progress has been made in the direction of proving or disproving it (see for instance \cite{questionpathpathwidth,Grytczukpathquestion,Grytczukpathquestion2,GrytczukGame,rosenfeldCounting,Skrabuvlakova2015TheTC,woodSurvey,deuxplusepsilon}
for some of the occurences of this problem).

The question can be reformulated in terms of combinatorics on words.
\begin{question}\label{mainquest}
Let $\alphabet$ be an infinite alphabet.
Is it true that for any sequence $(\ell_i)_{i\ge1}$ of subsets of $\alphabet$ of size $3$ there exists an infinite square-free word $w$ such that for all $i$, the $i$-th letter of $w$ belongs to $\ell_i$?
\end{question}
As already mentioned the answer is positive if $3$ is replaced by $4$. It was even shown in \cite{rosenfeldCounting} that there are at least $2^n$ such words of length $n$ for any list assignment.
We first show in this article, that there are at least $2.45^n$ such words.
We then show our main result: if $\alphabet$ is of size $4$ then there is such a word. 
Our approach is similar to the idea of \cite{rosenfeldCounting}, that is, we show some strong bounds on the number of such words using an inductive argument.
Our approach also relies on ideas of an approach of Shur to bounds the number of words power-free languages \cite{shurgrowthrate} (this improves an older technique introduced by Kolpakov \cite{Kolpakov2007}). This idea is that instead of directly counting the words we associate a weight to each word and we count the total weight of the set of valid words. In the approach of Kolpakov, he had to deal with three different kinds of squares (short squares, long squares, and mid-length squares) and we borrow a trick from Shur that allows dealing with only two kinds of squares (\textit{i.e.}, the mid-length squares do not need to be dealt with separately). In fact, the main difference between our proof and the approach of Shur is that eigenvectors and eigenvalues are replaced by a more complicated notion (which does not seems to be well studied or even named in the litterature).
For our main result, we use a computer to verify the existence of a set of weights with the right property. This approach could probably be applied if $\alphabet$ is of size larger than $4$ (and even for infinite $\alphabet$), but the computational power required is larger than what modern computers offer. In particular, we suspect that this approach would work if $\alphabet$ was of size $21$ and that would imply the conjecture.

The article is organized as follows. We first provide some notations and definitions in Section \ref{secdef}. 
In Section \ref{sec4letters}, we show that for any list assignment with lists of size $4$, there are at least $2.45^n$ square-free words of size $n$. We use this proof to introduce our technique. In section \ref{secmainred}, we use this technique to show our main result. In Section \ref{proofLemmaComputer}, we detail how we use the computer to verify the existence of the set of weights needed for our result. 
We conclude this article, in Section \ref{secconcl}, by explaining why we suspect that the same approach could solve Question \ref{mainquest}.

\section{Definitions and notations}\label{secdef}
We use the standard definitions and notations of combinatorics on words introduced in Chapter 1 of \cite{lothaire}.

For the sake of notations, all our alphabets are sets of integers. For any fixed alphabet $\mathcal{A}$, the \emph{set of extensions of a word $w$} is the set of words $\{wa: a\in \alphabet\}$. That is, a word $u$ is an extension of another word $w$ is u can be obtained by the concatenation of a letter at the end of $w$. For any words $w=w_1w_2\ldots w_n$ and any $i,j$  such that $1\le i\le j\le n$, we write $w[i,j]= w_iw_{i+1}\ldots w_j$.

A \emph{square} is a word of the form $uu$ with $u$ a non-empty word. The \emph{period} of the square $uu$ is $u$ and by abuse of notation we also call the length $|u|$ of $u$ the period of $uu$. A word is \emph{square-free} (or \emph{avoids squares}) if none of its factors is a square.

A \emph{list assignement} is a sequence $(\ell_i)_{i\ge1}$ of subsets of the integers.
A \emph{$k$-list assignement} is a list assignement $(\ell_i)_{i\ge1}$
such that each list is of size 4, \textit{i.e.} for all $i\ge1$, $|\ell_i|=4$.
We say that a word $w=w_1\ldots w_n$ \emph{respects} a list assignement $(\ell_i)_{i\ge1}$ if for all $i\in\{1,\ldots,n\}$, $w_i\in \ell_i$.

\section{The number of square-free words over 4 letters}\label{sec4letters}

We say that a word of length at least $3$ is \emph{perfect} if its suffix of length $3$ contains $3$ distinct letters (it is $012$ up to a permutation of the alphabet). A word is \emph{nice} if its suffix of length 3 is $010$ up to a permutation of the alphabet. A square-free word is either nice or perfect. The idea is that since nice words are intuitively easier to extend than nice words it is better to count them separately. However, since it is complicated to count them separately, we will count them together by weighting them. 

For any set of words $S$, $\widehat{S}$ is  the quantity obtained by summing the number of perfect words in $S$ together with $\sqrt3-1$ times the number of nice words in $S$. For instance, if $S=\{021,010,101,212\}$ then $\widehat{S}=3\sqrt3-2$.

\begin{lemma}\label{fourletterslemma}
Let $\ell=(\ell_i)_{i\ge1}$ be a $4$-list assignement and let $T_n$ be the number of square-free words of length $n$ that respect $\ell$.
Let $\beta>1$ be a real such that $1+\sqrt3 -  \frac{1}{\beta(\beta-1)}\ge \beta$.
Then, for any $n\ge3$,
$$\widehat{T_{n+1}}\ge\beta\widehat{T_{n}}\,.$$
\end{lemma}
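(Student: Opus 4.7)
The plan is to prove the lemma by strong induction on $n$ via a weighted double-counting argument. For each extension $we$ with $e\in\ell_{n+1}$ I would introduce a \emph{local weight} $\omega(we)$ equal to $0$ when $we$ ends in a square of period $1$ or $2$, equal to $1$ when the length-$3$ suffix of $we$ has three distinct letters, and equal to $\sqrt{3}-1$ when that suffix has the form $xyx$. This device handles short squares exactly; long-square bad extensions are corrected for afterwards.

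The first step is to show that for every square-free $w$ of length $n$ respecting $\ell$, with weight $\mathrm{wt}(w)$ equal to $1$ if $w$ is perfect and $\sqrt{3}-1$ if $w$ is nice,
\[
\bar{g}(w)\;:=\;\sum_{e\in\ell_{n+1}}\omega(we)\;\ge\;(1+\sqrt{3})\,\mathrm{wt}(w).
\]
When $w$ is perfect with suffix $abc$, only $e=c$ creates a period-$1$ square and no $e$ creates a period-$2$ square (since $a\neq c$); the worst case $\{b,c\}\subseteq\ell_{n+1}$ gives $\bar{g}(w)\ge(\sqrt{3}-1)+2=1+\sqrt{3}$. When $w$ is nice with suffix $aba$, both $e=a$ (period $1$) and $e=b$ (period $2$) create short squares, so the worst case $\{a,b\}\subseteq\ell_{n+1}$ gives $\bar{g}(w)\ge 2=(1+\sqrt{3})(\sqrt{3}-1)$. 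Summing over $w$ yields $\sum_w\bar{g}(w)\ge(1+\sqrt{3})\widehat{T_n}$.

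The second step accounts for the local weight wasted on extensions killed by a period-$k$ square with $k\ge 3$; write $W_k(w)$ for this contribution. The crucial combinatorial observation is that if $we$ ends in $uu$ with $|u|=k\ge 3$, then $u$ is a factor of the square-free word $w$ and is therefore itself square-free, which forbids $we$ from simultaneously ending in any period-$1$ or period-$2$ square (a period-$1$ coincidence would put $xx$ inside $u$; a period-$2$ coincidence either embeds a length-$4$ period-$2$ square in $u$ when $k\ge 4$, or forces $u$ to be a constant string when $k=3$). Hence $\omega(we)$ is determined by the last three letters of $u$, which coincide with the last three letters of $w':=w[1,n+1-k]$; since the map $(w,e)\mapsto w'$ is injective, $\sum_w W_k(w)\le\widehat{T_{n+1-k}}$.

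Combining the two steps,
\[
\widehat{T_{n+1}}\;\ge\;(1+\sqrt{3})\widehat{T_n}\;-\;\sum_{k=3}^{\lfloor(n+1)/2\rfloor}\widehat{T_{n+1-k}}.
\]
The induction hypothesis $\widehat{T_{m+1}}\ge\beta\widehat{T_m}$ for $3\le m<n$ gives $\widehat{T_{n+1-k}}\le\widehat{T_n}/\beta^{k-1}$, and since $\sum_{k\ge 3}\beta^{-(k-1)}=1/(\beta(\beta-1))$ the lemma's hypothesis $1+\sqrt{3}-1/(\beta(\beta-1))\ge\beta$ closes the induction. The base cases $n\in\{3,4\}$ are immediate: the long-square sum is empty and the hypothesis forces $\beta<1+\sqrt{3}$. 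The main technical obstacle I expect is the combinatorial observation ruling out the coexistence of long and short squares at the end of $we$, without which the short- and long-square contributions would overlap and the double-counting would over-subtract.
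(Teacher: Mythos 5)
Your proposal is correct and follows essentially the same route as the paper: your local-weight sum $\sum_w\bar g(w)$ is exactly the paper's lower bound $\widehat{G}\ge(1+\sqrt3)\widehat{T_n}$ on good extensions, and your bound $\sum_w W_k(w)\le\widehat{T_{n+1-k}}$ via the injective map $(w,e)\mapsto w[1,n+1-k]$ and the coincidence of the last three letters is the paper's bound on $\widehat{F_i}$, combined with the same geometric series $\sum_{k\ge3}\beta^{1-k}=\frac{1}{\beta(\beta-1)}$. Your explicit treatment of the small cases $n\in\{3,4\}$ and the observation that a long-square suffix excludes a short-square suffix are fine (the latter is not even strictly needed, since short-square enders carry local weight $0$), so the argument is sound.
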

\begin{proof}
  We proceed by induction on $n$. 
  Suppose that for every $i<n$, $\widehat{T_{i+1}}\ge\beta\widehat{T_{i}}$. Then, for all $i$,
  \begin{equation}\label{IHPS4letters}
\widehat{T_{n}}\ge \beta^i\widehat{T_{n-i}}\,.
  \end{equation} We need to show $\widehat{T_{n+1}}\ge\beta\widehat{T_{n}}\,.$

A word of length $n+1$ is \emph{good} if
\begin{itemize}
  \item it respects $\ell$,
  \item its prefix of length $n$ is in $T_{n}$,
  \item and it contains no square of period $1$ or $2$. 
\end{itemize} 
A word is \emph{wrong}, if it
is good, but not square-free (i.e., if one of its suffixes is a square of period longer than $2$).
Let $G$ be the set of good words and $F$ be the set of wrong words.
Then $T_{n+1}=G\setminus F$ and
\begin{equation}\label{eqSGFfourletters}
  \widehat{T_{n+1}}\ge \widehat{G}-\widehat{F}\,.
\end{equation}
We will now lower bound $\widehat{G}$ and then upper bound $F$ to reach our result.

Let $u$ be a perfect word from $T_{n}$ and let $abc$ be the factor of size $3$ at the end of $u$. Let us count the contributions to $G$ of the extensions of $u$.
Since $u$ is perfect the only reason for a square of period $1$ or $2$ to appear when adding a letter to $u$ is if this letter is $c$, so we need to forbid at most one letter of $\ell_{n+1}$. Moreover, if $b$ belongs to $\ell_{n+1}$ then there are at least 2 ways to extend $u$ into a perfect word of $G$ and one way to extend $u$ to a nice word of $G$. In this case, the contribution of the extensions of $u$ to $\widehat{G}$ is at least $2+(\sqrt3-1)=1+\sqrt3$. If $b$ does not belong to  $\ell_{n+1}$ then there are at least $3$ ways to extend $u$ to a perfect word of $G$ and the contribution of the extensions of $u$ to  $\widehat{G}$ is at least $3$. Thus the contribution to $\widehat{G}$ of the extensions of any perfect word from $T_{n}$ is at least $1+\sqrt3$. So the contribution of the extensions of any perfect word to $\widehat{G}$ is at least $1+\sqrt3$ times as large as its contribution to $\widehat{T_{n}}$.

For any value of the list $\ell_{n+1}$, any nice word from $T_{n}$ can be extended in at least 2 perfect words of $G$.
The contribution to $\widehat{G}$ of the extensions of any nice word from $T_{n}$ is at least $2$. Since the contribution of a nice word to $\widehat{T_n}$ is $\sqrt3-1$, the contribution of the extensions of any nice word to $\widehat{G}$ is at least $\frac{2}{\sqrt3-1}=1+\sqrt3$ times as large as its contribution to $\widehat{T_{n}}$.

Since the contribution of the extensions of any word to $\widehat{G}$ is at least $1+\sqrt3$ times as large as its contribution to $\widehat{T_{n}}$, we deduce

\begin{equation}\label{boundOnGfourletters}
\widehat{G}\ge(1+\sqrt3)\widehat{T_{n}}\,.
\end{equation}

Let us now bound $F$.
For all $i\ge1$, let $F_{i}$ be the set of words from $F$ that end with a square of period $i$.
Clearly, $F= \cup_{i\ge1} F_{i}$ and 
\begin{equation}\label{FFifourletters}
  \widehat{F}\le \sum_{i\ge1} \widehat{F_{i}}\,.
\end{equation}

By definition of $G$ and $F$, $|F_1|=|F_2|=0$.
Let $i\ge3$ and $u\in F_{i}$.
Since $u$ ends with a square of period $i$, the last $i$ letters of $u$ are uniquely determined by its prefix $v$ of size $n+1-i$. Since $v$ is a proper prefix of $u$, $v\in T_{n+1-i}$. And moreover, the last 3 letters of $v$ are identical to the last 3 letters of $u$. 
Thus the contribution of $v$ to $\widehat{T_{n+1-i}}$ is the same as the contribution of $u$ to $\widehat{F_{i}}$. We deduce
$$\widehat{F_{i}}=\widehat{T_{n+1-i}}\,.$$
Using this last equation with \eqref{IHPS4letters} and \eqref{FFifourletters} yields
$$
 \widehat{F}\le \sum_{i\ge 3} \widehat{F_i}
 \le \sum_{i\ge 3} \widehat{T_{n+1-i}}
 \le \sum_{i\ge 3} \widehat{T_n} \beta^{1-i}
 \le  \frac{\widehat{T_n}}{\beta(\beta-1)}\,.$$
We can use this bound and \eqref{boundOnGfourletters} in equation 
\eqref{eqSGF} to finally upper-bound $\widehat{T_{n+1}}$ and we obtain
\begin{equation*}
  \widehat{T_{n+1}}\ge \left(1+\sqrt3 -  \frac{1}{\beta(\beta-1)}\right)\widehat{T_n}\ge \beta\widehat{T_n}
\end{equation*}
where the second inequality is a consequence of the theorem hypothesis. 
\end{proof}

Since $\beta=2.45$ satisfies the condition of Lemma \ref{fourletterslemma}, we deduce the following theorem.
\begin{theorem}\label{fourletterstheorem}
  Fix a $4$-list assignement $(\ell_i)_{i\ge1}$ and let $T_{n}$ be the set of square-free words of length $n$ that respect this list assignement. Then for all $n\ge1$,
  $$|T_{n}|\ge 1.45^n\,.$$
\end{theorem}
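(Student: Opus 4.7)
The plan is to verify that $\beta = 2.45$ satisfies the hypothesis of Lemma \ref{fourletterslemma}, iterate the resulting inequality from a suitable base case, and then convert the weighted count $\widehat{T_n}$ into the unweighted count $|T_n|$. First I would check the arithmetic condition: with $\beta = 2.45$, one has $\beta(\beta-1) = 3.5525$ so $\frac{1}{\beta(\beta-1)} \approx 0.2815$, giving $1+\sqrt{3} - \frac{1}{\beta(\beta-1)} \approx 2.4506 \ge 2.45$, as required.

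Next I would iterate Lemma \ref{fourletterslemma} starting from $n=3$: for every $n \ge 3$, $\widehat{T_n} \ge \beta^{n-3}\widehat{T_3}$. Because perfect words contribute weight $1$ and nice words contribute weight $\sqrt{3}-1 < 1$, every square-free word contributes at most $1$ to $\widehat{T_n}$, so $|T_n| \ge \widehat{T_n}$.

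The remaining step is a uniform lower bound on $\widehat{T_3}$. For any $4$-list assignment, fixing $w_2 \in \ell_2$ leaves at least $|\ell_1|-1 = 3$ choices for $w_1 \ne w_2$ and at least $|\ell_3|-1 = 3$ choices for $w_3 \ne w_2$; summing over $w_2$ yields at least $4 \cdot 3 \cdot 3 = 36$ square-free words of length $3$. Since each contributes at least $\sqrt{3}-1$ to $\widehat{T_3}$, we get $\widehat{T_3} \ge 36(\sqrt{3}-1)$. Therefore, for $n \ge 3$,
\[
|T_n| \ge \widehat{T_n} \ge 36(\sqrt{3}-1)\cdot \beta^{n-3} = \frac{36(\sqrt{3}-1)}{\beta^{3}}\,\beta^{n}.
\]
A short numerical check shows $\frac{36(\sqrt{3}-1)}{\beta^{3}} > 1$, so in fact $|T_n| \ge \beta^n = 2.45^n \ge 1.45^n$ whenever $n \ge 3$. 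The cases $n = 1, 2$ are handled trivially: $|T_1| \ge 4 > 1.45$ and $|T_2| \ge 4\cdot 3 = 12 > 1.45^2$.

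There is no real obstacle here; the work is already done inside the lemma, and the theorem is just a matter of book-keeping. The only points that deserve care are (i) the numerical verification that $\beta = 2.45$ satisfies the hypothesis of the lemma (so that the induction applies), and (ii) the uniform, list-independent lower bound on $\widehat{T_3}$, which must hold for the worst possible $4$-list assignment in order to make the base of the induction meaningful.
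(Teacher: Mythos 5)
Your proof is correct, and it diverges from the paper's in one worthwhile way: the handling of the multiplicative constant. The paper only records that $T_3\neq\emptyset$, hence $\widehat{T_3}>0$, which after iterating Lemma \ref{fourletterslemma} leaves an unknown constant $\widehat{T_3}/2.45^3$ that may be smaller than $1$; it then removes this constant by a bootstrapping argument using submultiplicativity of $(|T_n|)_n$ (if $|T_n|<2.45^n$ for some $n$, then $|T_{ni}|\le|T_n|^i$ eventually violates the weighted bound). You instead prove a uniform, list-independent quantitative base case, $\widehat{T_3}\ge 36(\sqrt3-1)$, by counting length-$3$ words with $w_1\ne w_2$ and $w_2\ne w_3$ (which is exactly square-freeness at length $3$, and every such word is perfect or nice, so it has weight at least $\sqrt3-1$); since $36(\sqrt3-1)/2.45^3>1$, the bound $|T_n|\ge\widehat{T_n}\ge 2.45^n$ follows directly for $n\ge3$, with $n=1,2$ trivial. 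Your numerical verification of the hypothesis of Lemma \ref{fourletterslemma} for $\beta=2.45$ ($1+\sqrt3-\tfrac{1}{2.45\cdot 1.45}\approx 2.4506\ge 2.45$) matches the paper's. What your route buys is the elimination of the submultiplicativity step and an explicit constant (indeed you get $|T_n|\ge 2.45^n$ for $n\ge 3$ directly, which is what the paper's proof actually establishes, its stated $1.45^n$ notwithstanding); what the paper's route buys is that it needs nothing about the base case beyond non-emptiness, a convenience that matters more in the main theorem where the analogous constant $C_\varepsilon/\max_w C_w$ is not explicitly controlled.
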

\begin{proof}
 It is easy to verify that $T_3$ is not empty and thus $\widehat{T_3}>0$.
 One easily verifies that $\beta =2.45$ satisfies the conditions of Theorem \ref{technicalthm}. Thus for all $n\ge3$, 
 $$\widehat{T_n}\ge \frac{\widehat{T_3}}{2.45^3}\times 2.45^{n}\,.$$
Since the weight of any words is at most $1$, for any $n\ge3$,
 \begin{equation}\label{sizeofTn}
   |T_n|\ge\widehat{T_n}\ge \frac{\widehat{T_3}}{2.45^3}\times 2.45^{n}\,.
 \end{equation}

  For the sake of contradiction suppose that  $|T_{n}|< 2.45^n$ for some $n>1$. Then, there exists an integer $i$ such that $|T_{n}|^i<\frac{\widehat{T_3}}{2.45^3}\times 2.45^{ni}$.
Any factor of a square-free word is square-free, so the sequence $(|T_n|)_{n\ge0}$ is submultiplicative, that is, for all $i,j\ge1$, $|T_{i+j}|\le |T_{i}||T_{j}|$. 
In particular, $|T_{ni}|\le|T_{n}|^i<\frac{\widehat{T_3}}{2.45^3}\times 2.45^{ni}$ which contradicts equation \eqref{sizeofTn}. We deduce that for all $n$, $|T_{n}|\ge 2.45^n$, as desired.
\end{proof}

Let us briefly explain why this value of $1+\sqrt3$ plays a particular role in this proof. If the lists are all identical then any perfect word can be extended in two ways into another perfect word and in one way into a nice word, while any nice word can only be extended in two ways into a perfect word. The matrix of the corresponding automaton is
$$\begin{pmatrix}
    2&1\\
    2&0
  \end{pmatrix}\,.$$
  The dominant eigenvalue of this matrix is $1+\sqrt3$ and one possible eigenvector is $\begin{pmatrix}1\\\sqrt3 -1\end{pmatrix}$ which also explains the choice of the weights.
  We do not need for our weights to be an eigenvector, but we require a property similar to the statement of Lemma \ref{bycomputer}. It happens to be the case that the vector that has this property is also the eigenvector of this matrix, which corresponds to the intuition that the worst choice of list assignment is the one where all the lists are identical.

\section{Proof of the main result}\label{secmainred}
We fix $\alphabet=\{0,1,2,3\}$.
An \emph{$(\alphabet,3)$-list assignement} is a sequence of subsets of size $3$ of $\alphabet$.
We also fix $p=21$.

A word is \emph{normalized} if it is the smallest of all the words obtained by a permutation of the alphabet.
Let $\minsuff$ be the set of normalized prefixes of minimal squares of period at most $p$.
For any $w$, we let $\minsuff(w)$ be the longest word from $\minsuff$ that is a suffix of $w$ up to a permutation of the alphabet.
For any set of words $S$, and any $w\in \minsuff$, we let $S^{(w)}$ be the set of words from $S$ whose longest prefix that belongs to $\minsuff$ up to a permutation of the alphabet is $w$, that is $S^{(w)}=\{u\in S: \minsuff(u)=w\}$.

We denote the set of words that contain no square of period at most $p$ by $\mathcal{S}_{free}^{\le p}$. We are now ready to state the following lemma.

\begin{restatable}{lemma}{restcomp}
\label{bycomputer}
There exist coefficients $(C_w)_{w\in\minsuff}$ such that $C_\varepsilon>0$ and for all $v\in \minsuff$,
\begin{equation}\label{eqpassage}
\alpha C_v\le \min_{\substack{l\subseteq\alphabet\\|l|=3}} \sum_{\substack{a\in l\\ va\in \mathcal{S}_{free}^{\le p}}} C_{\minsuff(va)}
\end{equation}
where $\alpha = 13948 / 10721  \approx  1.301$.
\end{restatable}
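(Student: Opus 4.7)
The proof should be a computer-assisted verification, since the statement asserts the existence of a finite tuple of numerical coefficients with a prescribed inequality property. The plan is as follows.

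First, I would show that $\minsuff$ is finite and algorithmically enumerable. Every element of $\minsuff$ is a prefix of a square $uu$ with $|u|\le p=21$, so its length is at most $2p=42$; the normalization with respect to permutations of the four-letter alphabet fixes one representative per orbit. A direct enumeration over all minimal squares therefore produces $\minsuff$ explicitly. In the same pass I would tabulate, for each $v\in\minsuff$ and each $a\in\alphabet$, both the Boolean flag ``$va\in\mathcal{S}_{free}^{\le p}$'' and the word $\minsuff(va)$. Computing $\minsuff(va)$ from $v$ and $a$ in an incremental way (rather than scanning $va$ from scratch) will keep the tabulation tractable.

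Second, I would recast \eqref{eqpassage} as a finite system of linear constraints. For each $v\in\minsuff$ and each of the $\binom{4}{3}=4$ three-subsets $l$ of $\alphabet$, the quantity
$$\sum_{\substack{a\in l\\ va\in\mathcal{S}_{free}^{\le p}}} C_{\minsuff(va)}$$
is a linear functional in the unknowns $(C_w)_{w\in\minsuff}$. Replacing the minimum in \eqref{eqpassage} by a conjunction of inequalities over these four lists yields at most $4|\minsuff|$ linear constraints; adjoining the normalization $C_\varepsilon = 1$ (which witnesses $C_\varepsilon>0$) makes the problem a standard linear-programming feasibility question parameterized by $\alpha$.

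Third, I would feed this LP to an exact-rational solver, maximizing $\alpha$ subject to the constraints, and read off both the optimal $\alpha$ and the witnessing coefficients $(C_w)$. The claim to verify is that this optimum is at least $13948/10721$. A second, independent pass would then check in exact rational arithmetic that the produced weights satisfy \eqref{eqpassage} for every $v$ and every three-subset $l$; this verification is the only step that truly needs to be trusted, since it consists of plugging numbers into a finite list of inequalities.

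The main obstacle is computational scale: $|\minsuff|$ grows rapidly with $p$, and the number of constraints scales as $4|\minsuff|$, so the linear program becomes large for $p=21$. The authors' comment that a much more powerful computer would be required to push $p$ higher suggests that even at $p=21$ the enumeration and LP solve are near the edge of feasibility. Exploiting the alphabet-permutation symmetry when normalizing words in $\minsuff$ (so as to only store one representative per orbit) and using an incremental update for the function $(v,a)\mapsto\minsuff(va)$ are the ingredients I expect to be essential for the whole procedure to run in reasonable time and memory.
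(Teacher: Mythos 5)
Your overall skeleton matches the paper's: enumerate $\minsuff$ explicitly (it is finite, stored in the paper as a trie exploiting that $\minsuff$ is prefix-closed), tabulate the transition map $(v,a)\mapsto\minsuff(va)$ together with the flag $va\in\mathcal{S}_{free}^{\le p}$ (the paper encodes this as a directed multigraph on $\minsuff$), and then certify the finitely many inequalities \eqref{eqpassage} in exact rational arithmetic; you also correctly identify that the verification pass is the only part that must be trusted. Where you diverge is in how the coefficients are produced. The paper does not solve a linear program: it runs a Perron--Frobenius-style power iteration, repeatedly replacing $C_v$ by $\min_{l}\sum_{a\in l,\, va\in\mathcal{S}_{free}^{\le p}} C_{\minsuff(va)}$ (computed as the sum of out-neighbour weights minus the largest one), renormalizing, and after 50 iterations checking that the resulting integer coefficients achieve $\alpha=13948/10721$. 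This matters quantitatively: $|\minsuff|=854\,683\,883$, so your LP would have close to $10^9$ variables and several billion constraints, far beyond what exact-rational LP solvers can handle (the paper's iteration, which costs one pass over the arcs per step, already needed about 2 hours and 76.4\,Go of RAM). Since the proof of this lemma \emph{is} the computation, the practical infeasibility of the exact LP step is a real weakness of your route, even though its logic is sound; the fix is exactly the paper's move of using a cheap heuristic (power iteration) to generate candidate weights and reserving exact arithmetic for the final verification. A smaller point: maximizing $\alpha$ subject to $\alpha C_v\le\sum_a C_{\minsuff(va)}$ is not itself an LP (the constraint is bilinear in $\alpha$ and $C$), so you would need to fix $\alpha$ and test feasibility, e.g.\ by bisection --- consistent with your ``parameterized by $\alpha$'' phrasing, but worth making explicit.
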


The proof of this lemma relies on a computer verification that we delay to section \ref{proofLemmaComputer}. For the rest of this section let us fix coefficients $(C_w)_{w\in\minsuff}$ and $\alpha$ that respect the conditions of Lemma \ref{bycomputer}.
For each set $S$ of words, we let 
$$\widehat{S} =\sum_{w\in \minsuff} C_w |S^{(w)}|\,.$$
Whenever we mention \emph{the weight of a word $w$},
we mean $C_{\minsuff(w)}$.

The main idea of the proof of Theorem \ref{technicalthm} is essentially the same as in Lemma \ref{fourletterslemma}, but we have a few more technicities to handle along the way.
We are going to count inductively the total weight of the square-free words of size $n$ that respect a fixed $3$-list assignment.
Intuitively, the set $\minsuff$ plays the same role as $\{010,012\}$ and $\alpha$ plays the same role as $1+\sqrt3$.
We are now ready to state our main Theorem.
\begin{theorem}\label{technicalthm}
Let $\ell=(\ell_i)_{i\ge1}$ be a $(\Sigma,3)$-list assignement  and for all $n\ge0$, let $S_{n}$ be the set of square-free words of length $n$ that respect $\ell$.
Let $\beta>1$ be a real number such that
$$\alpha -  \frac{ \beta^{1-p}}{\beta-1}\ge \beta\,.$$
  Then for all  $n\ge0$, 
  $$\widehat{S_{n+1}}\ge \beta\widehat{S_n}\,.$$
\end{theorem}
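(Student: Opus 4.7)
The plan is to proceed by induction on $n$, following the blueprint of Lemma \ref{fourletterslemma} with the set $\minsuff$ and the constant $\alpha$ taking over the roles of $\{012,010\}$ and $1+\sqrt3$, respectively. The induction hypothesis at stage $n$ is $\widehat{S_{n+1-i}}\le\beta^{1-i}\widehat{S_n}$ for every $1\le i\le n+1$; the base case $n=0$ reduces to $\widehat{S_1}\ge\beta C_\varepsilon$, which follows from \eqref{eqpassage} applied with $v=\varepsilon$ together with $\alpha\ge\beta$ (noting that a single letter contains no square). For the inductive step, I would define $G=\{ua:u\in S_n,\ a\in\ell_{n+1},\ ua\in\mathcal{S}_{free}^{\le p}\}$ and $F=G\setminus S_{n+1}$, so that $F$ consists of good words of length $n+1$ whose only square is a suffix of period at least $p+1$, and $\widehat{S_{n+1}}\ge\widehat{G}-\widehat{F}$.

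The lower bound $\widehat{G}\ge\alpha\widehat{S_n}$ is obtained by grouping elements of $G$ by their prefix $u\in S_n$: since $\minsuff$ is prefix-closed, setting $v=\minsuff(u)$ yields $\minsuff(ua)=\minsuff(va)$ for every letter $a$, and whether $ua$ contains a short square depends only on $va$; the extensions of $u$ therefore contribute $\sum_{a\in\ell_{n+1},\,va\in\mathcal{S}_{free}^{\le p}}C_{\minsuff(va)}\ge\alpha C_v$ to $\widehat{G}$ by \eqref{eqpassage}. The upper bound on $\widehat{F}$ uses the partition $F=\bigcup_{i\ge p+1}F_i$ according to the period of the terminal square: for $u\in F_i$, the prefix $v$ of $u$ of length $n+1-i$ lies in $S_{n+1-i}$, determines $u$ (the square relation forces the last $i$ letters of $u$ to equal those of $v$), and, crucially, satisfies $\minsuff(u)=\minsuff(v)$ because $u$ and $v$ share their suffix of length $i\ge p+1$ while every element of $\minsuff$ is short enough (length at most $p+1$) for this shared suffix to pin down the $\minsuff$-value. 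The map $u\mapsto v$ is thus a weight-preserving injection into $S_{n+1-i}$, so $\widehat{F_i}\le\widehat{S_{n+1-i}}\le\beta^{1-i}\widehat{S_n}$ by the induction hypothesis, and summing the geometric series gives $\widehat{F}\le\frac{\beta^{1-p}}{\beta-1}\widehat{S_n}$.

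Combining these two bounds with the hypothesis $\alpha-\beta^{1-p}/(\beta-1)\ge\beta$ closes the induction. The main obstacle I expect is the identification $\minsuff(u)=\minsuff(v)$ for $u\in F_i$ with $i\ge p+1$: it rests on a length bound for elements of $\minsuff$ that must be extracted carefully from the precise notion of ``minimal square'' used in the definition. Every other step is a direct transcription of the corresponding argument in the proof of Lemma \ref{fourletterslemma}, with the weight $C_{\minsuff(w)}$ playing the role previously taken by the classification of square-free words into nice and perfect.
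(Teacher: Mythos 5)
Your overall architecture matches the paper's: the same induction hypothesis, the same sets $G$ and $F$, the bound $\widehat{G}\ge\alpha\widehat{S_n}$ via Lemma \ref{bycomputer}, and the injection $u\mapsto v$ (deleting the last $i$ letters) to bound $\widehat{F_i}$. The genuine gap is exactly at the step you flagged: your justification that $\minsuff(u)=\minsuff(v)$ for $u\in F_i$ rests on the claim that every element of $\minsuff$ has length at most $p+1$, and that claim is false. The set $\minsuff$ consists of normalized prefixes of minimal squares of period at most $p$; a minimal square of period $q\le p$ has length $2q$, and all of its proper prefixes are square-free, so a good word can perfectly well have $\minsuff$-value of length up to $2p-1=41>p+1$. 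Hence the shared suffix of length $i$ (with $i$ possibly as small as $p+1$) does not by itself pin down the $\minsuff$-value, and without $\minsuff(u)=\minsuff(v)$ the injection is not weight-preserving: one could a priori have $C_{\minsuff(u)}>C_{\minsuff(v)}$, and $\widehat{F_i}\le\widehat{S_{n+1-i}}$ would not follow, since the coefficients produced by the computer carry no monotonicity you could invoke instead.

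The paper closes this hole with a short combinatorics-on-words argument that your proposal is missing: it shows that if $u\in F_i^{(w)}$ then $|w|\le i$. Indeed, if $|w|>i$, then $w$ is a prefix of a minimal square with period word $v$, $|v|\le p$, so the suffix of $u$ of length $|w|$ is $|v|$-periodic; combining this periodicity with the terminal square of period $i$ produces inside $u$ a square of period $i-|v|$ with $1\le i-|v|\le p$ (square-freeness of $w$ forces $2|v|>|w|\ge i$), contradicting that $u$ is good, i.e.\ avoids squares of period at most $p$. A periodicity-versus-square argument of the same kind is what rules out the prefix of length $n+1-i$ having a longer $\minsuff$-suffix than $u$ itself: if its last $i$ letters (which coincide with those of $u$) were $q$-periodic for some $q\le p$, then either they already contain a square of period $q$, or ($i<2q$) the terminal square of period $i$ contains a square of period $i-q\le p$. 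So the identification you want is true, but it follows from the interplay between short-period periodicity and the long terminal square, not from any length bound on elements of $\minsuff$; as written, your bound on $\widehat{F}$ is incomplete at its crucial step.
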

\begin{proof}
  We proceed by induction on $n$. Let $n$ be an integer such that the lemma holds for any integer smaller than $n$ and let us show that 
  $\widehat{S_{n+1}}\ge \beta\widehat{S_n}$.
  
  By induction hypothesis, for all $i$,
  \begin{equation}\label{IHPS}
\widehat{S_{n}}\ge \beta^i\widehat{S_{n-i}}\,.
  \end{equation}

A word of length $n+1$ is \emph{good} if
\begin{itemize}
  \item it respects $\ell$,
  \item its prefix of length $n$ is in $S_{n}$,
  \item and it contains no square of period at most $p$.
\end{itemize} A word is \emph{wrong}, if it
is good, but not square-free (i.e., if one of its suffixes is a square of period longer than $p$).
We let $G$ be the set of good words and $F$ be the set of wrong words.
Then $S_{n+1}=G\setminus F$ and

\begin{equation}\label{eqSGF}
  \widehat{S_{n+1}}\ge \widehat{G}-\widehat{F}\,.
\end{equation}

Let us first lower-bound $\widehat{G}=\sum_{w\in \minsuff} |G^{(w)}|C_w$.

By definition, $\minsuff(v)$ is the longest suffix of $v$ that is prefix of a square of period of length at most $p$ (up to permutation of the alphabet). This implies that for any square-free word $v$ and for any word $u$, $vu\in\mathcal{S}_{free}^{\le p}$ if and only if $\minsuff(v)u\in\mathcal{S}_{free}^{\le p}$. For the same reason, for any square-free word $v$ and letter $a$, $\minsuff(va)=\minsuff(\minsuff(v)a)$.
We then deduce that the contribution of the extentions of any word $v\in S_n$ to $\widehat{G}$ is 
$$\sum\limits_{\substack{a\in  \ell_{n+1}\\ va\in \mathcal{S}_{free}^{\le p}}} C_{\minsuff(va)}=
\sum\limits_{\substack{a\in  \ell_{n+1}\\ \minsuff(v)a\in \mathcal{S}_{free}^{\le p}}} C_{\minsuff(\minsuff(v)a)}
\ge\min_{\substack{l\subseteq\alphabet\\|l|=3}} \sum_{\substack{a\in l\\ \minsuff(v)a\in \mathcal{S}_{free}^{\le p}}} C_{\minsuff(\minsuff(v)a)}\,.$$
By Lemma \ref{bycomputer}, we deduce that the contribution of the extentions of any word $v\in S_n$ to $\widehat{G}$ is at least $\alpha C_{\minsuff(v)}$.
We sum the contributions over $S_n$ 
\begin{equation}\label{boundOnG}
\widehat{G}\ge\sum_{v\in S_n}\alpha C_{\minsuff(v)}
=\sum_{u\in \minsuff}\alpha C_u |S_n^{(u)}|=\alpha \widehat{S_{n}}\,.
\end{equation}

Let us now bound $F$.
For all $i$, let $F_{i}$ be the set of words from $F$ that end with a square of period $i$.
Then $F= \cup_{i\ge1} F_{i}$ and 
\begin{equation}\label{FFi}
  \widehat{F}\le \sum_{i\ge1} \widehat{F_{i}}\,.
\end{equation}

Let us now upper-bound the $\widehat{F_{i}}$ separately depending on $i$.

\paragraph{Case $i \le p$ :} 
By definition of $G$ and $F$, any word from $F$ avoids squares of period at most $p$. Hence, $i \le p$ implies $|F_i|=0$ and $\widehat{F_{i}}=0$.

\paragraph{Case $i > p$ :} 
Let $w\in \minsuff$ and $u\in F_i^{(w)}$.
For the sake of contradiction, suppose that $|w|> i$. Let $v$ be the period of $w$. 
Then the factor of length $i-|v|$ that ends a position $n+1-|v|$ is identical to the last $i-|v|$ letters of the word, that is
$$u[n+2-i,n+1-|v|]=u[n+2-i+|v| , n+1]\,.$$
These two factors are well defined since $|v|\le p<i$.
 Moreover $u\in F_i^{(w)}$, and  the factor of length $i-|v|$ that ends at position  $n+1-i$ is identical to the last $i-|v|$ letters of the word, that is
 $$u[n+2-2i+|v|,n+1-i]=u[n+2-i+|v| , n+1]\,.$$
 The resulting equality $$u[n+2-i,n+1-|v|]=u[n+2-2i+|v|,n+1-i]$$ implies that there is a square of period $i-|v|\ge1$ in $u$. 
Since $w$ is square free, we have $2|v|>|w|\ge i$ and $i-|v|<|v|\le p$. So the square is of period at most $p$ which is a contradiction. Hence if $F_i^{(w)}$ is non-empty, then $|w|\le i$. 

The suffix of length $2i$ of any word from $F_i^{(w)}$ is a square, so the last $i$ letters of any word of $F_i^{(w)}$ are uniquely determined by the remaining prefix, and this prefix belongs to $S_{n+1-i}^{(w)}$ (since $|w|\le i$). 
Hence,
$$\widehat{F_{i}}\le \widehat{S_{n+1-i}}\le \widehat{S_n} \beta^{1-i}$$
where the second inequality comes from \eqref{IHPS}.
This bound with \eqref{FFi} yields
$$
 \widehat{F}\le \sum_{i\ge p+1} \widehat{S_n} \beta^{1-i}\le \widehat{S_n} \frac{ \beta^{1-p}}{\beta-1}\,.$$
We use this bound and \eqref{boundOnG} in equation 
\eqref{eqSGF} to finally upper-bound $\widehat{S_{n+1}}$,
\begin{equation*}
  \widehat{S_{n+1}}\ge \left(\alpha -  \frac{ \beta^{1-p}}{\beta-1}\right)\widehat{S_n}\ge \beta\widehat{S_n}
\end{equation*}
where the second inequality is a consequence of the theorem hypothesis.
\end{proof}

Since $\beta=1.25$ satisfies the conditions of Theorem \ref{technicalthm}, we can deduce our main result.
\begin{theorem}\label{mainthm}
  Fix a $(\Sigma,3)$-list assignement $(\ell_i)_{i\ge1}$ and let $S_{n}$ be the set of square-free words of length $n$ that respect this list assignement. Then for all $n\ge1$,
  $$|S_{n}|\ge 1.25^n\,.$$
\end{theorem}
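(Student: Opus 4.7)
The plan is to follow the template established by the proof of Theorem \ref{fourletterstheorem}, substituting Theorem \ref{technicalthm} for Lemma \ref{fourletterslemma}. First I would verify that $\beta=1.25$ satisfies the hypothesis of Theorem \ref{technicalthm}: with $p=21$ and $\alpha = 13948/10721 \approx 1.301$, the subtracted term $\beta^{1-p}/(\beta-1)=4\cdot 1.25^{-20}$ is roughly $0.05$, so $\alpha - \beta^{1-p}/(\beta-1) \approx 1.255 > 1.25$, and the hypothesis holds.

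Next, iterating Theorem \ref{technicalthm} from the trivial base case $S_0=\{\varepsilon\}$ (for which $\widehat{S_0}=C_\varepsilon>0$ by Lemma \ref{bycomputer}) yields $\widehat{S_n} \ge C_\varepsilon \cdot 1.25^n$ for every $n\ge 0$. To translate this into a bound on $|S_n|$, let $M = \max_{w\in\minsuff}C_w$ (finite since $\minsuff$ is finite); then $\widehat{S_n} = \sum_w C_w|S_n^{(w)}| \le M|S_n|$, so $|S_n| \ge (C_\varepsilon/M) \cdot 1.25^n = c \cdot 1.25^n$ for a constant $c>0$ that is independent of $n$ and of the particular list assignment.

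Then I would remove the constant $c$ by the same submultiplicativity trick as in Theorem \ref{fourletterstheorem}. Suppose for contradiction that some $(\Sigma,3)$-list assignment $\ell$ admits an integer $n \ge 1$ with $|S_n|<1.25^n$. Extending $\ell$ periodically with period $n$ makes $|S_{kn}| \le |S_n|^k$ valid for all $k$ (each length-$n$ block of a word in $S_{kn}$ is itself constrained by the sublist $\ell_1,\ldots,\ell_n$ and must be square-free), and for $k$ large enough $|S_n|^k < c\cdot 1.25^{kn}$, contradicting the previous paragraph applied to the periodic assignment. Hence $|S_n| \ge 1.25^n$ for all $n \ge 1$ and every $(\Sigma,3)$-list assignment.

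The only content requiring actual computation is the numerical verification in the first paragraph; everything else is routine once Theorem \ref{technicalthm} and Lemma \ref{bycomputer} are in hand, so there is no serious obstacle beyond the machinery already set up in the paper.
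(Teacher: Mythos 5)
Your proposal is correct and follows essentially the same route as the paper: verify that $\beta=1.25$ satisfies the hypothesis of Theorem \ref{technicalthm}, iterate from $\widehat{S_0}=C_\varepsilon>0$ to get $\widehat{S_n}\ge C_\varepsilon\cdot 1.25^n$, divide by $\max_{w\in\minsuff}C_w$ to bound $|S_n|$, and then remove the constant by submultiplicativity. Your periodic extension of the list assignment is in fact a slightly more careful treatment of that last step than the paper's, which states $|S_{i+j}|\le|S_i||S_j|$ without remarking that the length-$j$ suffix respects the shifted lists rather than $\ell_1,\ldots,\ell_j$.
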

\begin{proof}
 By definition, $\widehat{S_0}= C_{\varepsilon}$.
 One easily verifies that $\beta =1.25$ satisfies the conditions of Theorem \ref{technicalthm}. Thus for all $n$, 
 $$\widehat{S_n}\ge C_{\varepsilon}\times 1.25^n\,.$$
 Hence,
 \begin{equation}\label{mainSnbound}
|S_n|\ge \frac{\widehat{S_n}}{\max\limits_{w\in\minsuff}C_w}\ge \frac{C_{\varepsilon}}{\max\limits_{w\in\minsuff}C_w}\times 1.25^n\,.   
 \end{equation}
For the sake of contradiction suppose that  $|S_{n}|< 1.25^n$ for some $n>1$. By Lemma \ref{bycomputer}, $C_{\varepsilon}>0$ so there exists an integer $i$ such that $|S_{n}|^i<\frac{C_{\varepsilon}}{\max_{w\in\minsuff}C_w}\times1.25^{ni}$.  
Any factor of a square-free word is square-free, so the sequence $(|S_n|)_{n\ge0}$ is submultiplicative, that is, for all $i,j\ge1$, $|S_{i+j}|\le |S_{i}||S_{j}|$. 
In particular, $|S_{ni}|\le|S_{n}|^i<C_{\varepsilon}\times1.25^{ni}$ which contradicts equation \eqref{mainSnbound}. We deduce that for all $n$, $S_{n}\ge 1.25^n$, as desired.
\end{proof}

\section{Proof of Lemma \ref{bycomputer}}\label{proofLemmaComputer}
A classic procedure to compute Perron-Frobenius egeinvector (and eigenvalue) of a matrix
is simply to iterate the matrix over some ``random'' starting vector.
Our coefficients $(C_w)_{w\in\minsuff}$ plays a similar role to the role of an eigenvector and we use the same idea to compute the desired coefficients.
In the case of Perron-Frobenius eigenvector, it is known that under general assumptions this procedure converges toward the desired vector. In our case, we suspect that the procedure is also convergent under rather general assumptions, but we did not try to prove anything in this direction. However, after enough iterations, we find a vector that respects the conditions of our Lemma.
Before discussing the details of this procedure we restate the Lemma.

\restcomp*

It is enough to provide coefficients with the right property. 
Since $|\minsuff|= 854683883$, instead of providing the coefficients, it is more efficient to provide a computer program that computes the set $\minsuff$ and the coefficients with the desired properties\footnote{The C++ implementation can be found in the ancillary file on the arXiv. Running this program took approximately 2 hours of computation and occupied 76.4 Go of RAM.}. This also has the advantage that the same program directly verifies that the coefficients do indeed have the desired property.

The first step is to compute the set $\minsuff$ of prefixes of minimal squares of period at most $p$. This set is stored inside a trie (also called prefix tree). It is efficient (in particular in terms of memory consumption) since the set $\minsuff$ is prefixed closed (\textit{i.e.} $\minsuff$ contains any prefix of any word from $\minsuff$).
Each word of $\minsuff$ is given a unique integer as an identifier.
In a second step, we compute the directed multi-graph $G$ over the vertices $\{0,\ldots, |\minsuff|-1\}$ and such that there is an arc from $i$ to $j$, if $\minsuff(ua)$ is the word corresponding to $j$ where $u$ is the word corresponding to $i$ and $a$ is any letter. More precisely, if $u$ is the word associated with the integer $i$ and $v$ is the word associated with $j$ then the multiplicity of the number of arcs from $i$ to $j$ in our graph is given by $|\{a\in\alphabet : u=\minsuff(va)\}|$. There are not many arcs with multiplicity larger than 1, but because of the normalization, this may happen (for instance, there are three arcs from the vertex of the word $0$ to the vertex of the word $01$, since $01$ is the normalization of $02$ and $03$ and similarly there are four arcs from the empty word to the word $0$). This graph is useful to efficiently compute for any $v\in\minsuff$, the quantity $\min\limits_{\substack{l\subseteq\alphabet\\|l|=3}} \sum\limits_{\substack{a\in l\\ va\in \mathcal{S}_{free}^{\le p}}} C_{\minsuff(va)}$. It is simply the sum of the weights of its out-neighbors minus the weight of the one of largest weight\footnote{A vertex $u$ is said to be \emph{an out-neighbor} of a vertex $v$ if there is an arc from $v$ to $u$.}. 

We considere the procedure that takes coefficients $(C_w)_{w_\in\minsuff}$ as input and produces the coefficients $(C'_w)_{w_\in\minsuff}$ such that for each $v\in\minsuff$
$$C'_v = \min_{\substack{l\subseteq\alphabet\\|l|=3}} \sum_{\substack{a\in l\\ va\in \mathcal{S}_{free}^{\le p}}} C_{\minsuff(va)}\,.$$

For every $w$, we call the quantity $C'_w/C_w$ the \emph{growth associated to $w$}. If we let $\alpha$ be the minimum of the growth over every $w\in\minsuff$ then $\alpha$ and the set of coefficients $C_w$ respect the condition of equation \ref{eqpassage}. Our goal is then simply to find coefficients $(C_w)_{w_\in\minsuff}$ that gives the largest value of $\alpha$.

To find, our coefficient we simply start by setting all the $C_w$ to the same value ($100000$ in our implementation), and then we iterate our procedure. 
Between two iterations we renormalize the coefficients by dividing every coefficient by the same constant to keep the average value at some fixed value ($100000$ in our implementation).
After 50 iterations, we find coefficients $(C_w)_{w_\in\minsuff}$ and $\alpha = 13948 / 10721  \approx  1.301$ such that the conditions of the Lemma are verified.

We suspect that there are good reasons for which this procedure seems to converge toward the optimal. However, it is enough that we verified that after 50 iterations this deterministic procedure produces coefficients $(C_w)_{w_\in\minsuff}$ with the desired property. 

Let us finally mention that, every computation is carried out using integers (or pairs of integers for rational numbers) so that there are no issues of precision. It is important, since with $\alpha > 1.3$ we can take $p=21$ and $\beta=1.25$ in Theorem \ref{mainthm}, but with $\alpha=1.295$ there is no $\beta$ that satisfy the condition of Theorem \ref{technicalthm}.

\section{Conclusion}\label{secconcl}
We showed something slightly stronger than Theorem \ref{mainthm}.
Indeed, our proof of Theorem \ref{technicalthm} also holds if an adversary chooses each list only right before we chose the letter from the corresponding list (instead of fixing the lists from the start we have). Our proof of Theorem \ref{fourletterstheorem} also holds in this stronger setting. We suspect that the answer to Question \ref{mainquest} is positive in this stronger setting. 

We showed that for any choice of lists of size $3$ amongst $\{0,1,2,3\}$ the number of square-free word of length $n$ is at least $1.25^n$. By pushing the computation slightly further, we can replace $1.25$ with $1.28$.
We suspect that the number of square-free words is minimal when all the lists are identical (the growth rate of the number of square-free word of length $n$ is known to be approximatively 1.3017 \cite{shurgrowthrate}). This might even be true in the stronger context where an adversary chose the next list right after we chose the next letter. Note that, it is also really simple to use the same technique to improve the bounds of Theorem \ref{fourletterstheorem} by computing coefficients with the aid of a computer (although looking at suffixes of length $5$ or $7$ instead of $3$ by hand is doable and would already improve the bounds).

Finally, let us conclude by mentioning that we believe that this approach could be used to answer positively Question \ref{mainquest}. The size of the set $\minsuff$ depends on the size of $\alphabet$. However, since we only consider normalized words for $p=21$ the set $\minsuff$ is the same for any $\alphabet$ such that $|\alphabet|\ge21$. Let us provide a crude upper bound on the size of this set.

The number of square-free words of size $n$ that use $k$ (by that we mean that the $k$ letters appear in the word) letters is less $k\times(k-1)^{n-1}$ (we forbid square of period $1$). So the number of normalized words of size $n$ that use $k$ letters is less than $k\times(k-1)^{n-1}/k!=(k-1)^{n-2}/(k-2)!$.
The number of normalized minimal square of period $n$ that use $k$ letter is less than $(k-1)^{n-2}/(k-2)!$ and the number of proper prefixes of length at least $n$ of such words is then at most $n\times (k-1)^{n-2}/(k-2)!$. By summing over $n$ and $k$, the number of prefixes of minimal square-free words of period at most $p$ over $s$ letters is at most
$$\sum_{n=1}^{p}\sum_{k=1}^s n\frac{(k-1)^{n-2}}{(k-2)!}\,.$$
Evaluating this expression with $p=21$ and $s=21$ tells us that ,with $|\alphabet|=21$, the set $\minsuff$ has size at most $3.4\times10^{15}$. This is less than $10^7$ times larger than the set that required $70$Go of RAM to be computed. Our bound being crude we suspect that $\Lambda$ is in fact smaller than that (a more careful computation taking into account squares of period $2$ divides this bounds by approximatively $4$). We might be able to solve this question with this approach in a few decades. It might also be possible to exploit some other symmetries of the problem to reduce the number of words considered.

Let us finally mention that the base idea of the counting technique was recently used in more general context \cite{rosenfeldCounting,wanlessWood}. In particular, Wanless and Wood provided a general result based on a similar idea and applied it to graph colorings, hypergraph colorings and SAT-formula \cite{wanlessWood}. It is not clear, how the more advanced argument used here (\textit{i.e.}, counting the total weights of the solutions instead of simply counting the number of solutions) can be used in a context more general than combinatorics on words or even whether it can be used in a framework similar to the one developed by Wanless and Wood.

\bibliographystyle{splncs03}

\end{document}